\definecolor{refcolor}{RGB}{0,0,190}
\theoremstyle{definition}
\newtheorem{theorem}{Theorem}[section]
\newtheorem{definition}[theorem]{Definition}
\newtheorem{lemma}[theorem]{Lemma}
\newtheorem{corollary}[theorem]{Corollary}
\newtheorem{remark}[theorem]{Remark}
\def\({\left(}
\def\){\right)}
\newcommand{\R}{\mathbb{R}}
\newcommand{\de}{\textnormal{d}}
\newcommand{\tn}{\textnormal}
\newcommand{\ds}{\displaystyle}
\newcommand{\ie}{\textit{i.e.} }
\newcommand{\cf}{\textit{cf.} }
\newcommand{\eg}{\textit{e.g.} }
\newcommand{\citep}[2]{\cite{#1}, p. #2}
\newcommand{\cfeg}[2]{(\cf \eg \citep{#1}{#2})}
\newcommand{\rank}{\textnormal{rank }}
\newcommand{\diag}{\textnormal{diag}}
\newcommand{\mf}[1]{\mathfrak{#1}}
\newcommand{\mc}[1]{\mathcal{#1}}
\newcommand{\ms}[1]{\mathscr{#1}}
\newcommand{\sref}[1]{\S\ref{#1}}
\newcommand{\idxannih}[2]{#1{}^{#2}{}}
\newcommand{\idxcoannih}[2]{#1{}_{#2}{}}
\newcommand{\annih}[1]{\idxannih{#1}{\bullet}}
\newcommand{\coannih}[1]{\idxcoannih{#1}{\bullet}}
\newcommand{\annihg}{\coannih{g}}
\newcommand{\metric}[1]{\langle#1\rangle}
\newcommand{\cocontr}{{{}_\bullet}}
\newcommand{\vectmodule}{\mf X}
\newcommand{\fivect}[1]{\vectmodule(#1)}
\newcommand{\fiscal}[1]{\ms F(#1)}
\newcommand{\annihforms}[1]{\annih{\mc A}(#1)}
\newcommand{\lie}{\mc L}
\newcommand{\kosz}{\mc K}
\newcommand{\metricname}{fundamental tensor}
\def\hyph{-\penalty0\hskip0pt\relax}
\newcommand{\semiriem}{semi{\hyph}Riemannian}
\newcommand{\semireg}{semi{\hyph}regular}
\newcommand{\nondeg}{non{\hyph}degenerate}
\newcommand{\rstationary}{radical{\hyph}stationary}
\begin{document} 
 
\title{Cartan's Structural Equations for Singular Manifolds}
\author{Cristi \ Stoica}
\date{\today}
\thanks{Partially supported by Romanian Government grant PN II Idei 1187.}

\begin{abstract}
The classical Cartan's structural equations show in a compact way the relation between a connection and its curvature, and reveals their geometric interpretation in terms of moving frames. In order to study the mathematical properties of singularities, we need to study the geometry of manifolds endowed on the tangent bundle with a symmetric bilinear form which is allowed to become degenerate. But if the fundamental tensor is allowed to be degenerate, there are some obstructions in constructing the geometric objects normally associated to the fundamental tensor. Also, local orthonormal frames and coframes no longer exist, as well as the metric connection and its curvature operator. This article shows that, if the fundamental tensor is radical stationary, we can construct in a canonical way geometric objects, determined only by the fundamental form, similar to the connection and curvature forms of Cartan. In particular, if the fundamental tensor is non-degenerate, we obtain the usual connection and curvature forms of Cartan. We write analogs of Cartan's first and second structural equations. As a byproduct we will find a compact version of the Koszul formula.
\bigskip
\noindent 
\keywords{Cartan's structural equations,singular semi-Riemannian manifolds,singular semi-Riemannian geometry,degenerate manifolds,semi-regular semi-Riemannian manifolds,semi-regular semi-Riemannian geometry}
\end{abstract}


\maketitle

\setcounter{tocdepth}{1}
\tableofcontents

%
\section{Introduction}

In {\semiriem} geometry (including Riemannian), Cartan's first and second structural equations establish the relation between a local orthonormal frame, the connection, and its curvature. We are interested in having such powerful tools in {\em singular {\semiriem} geometry} \cite{Sto11a,Sto12e}, which is the geometry of manifolds endowed on the tangent bundle with a symmetric bilinear form, which is allowed to become degenerate and change the signature. Singular {\semiriem} geometry has been used successfully by the author to study the big bang singularity of the Friedmann-Lema\^itre-Robertson-Walker spacetimes \cite{Sto12a,Sto12c}, the black hole singularities of the Schwarzschild, Reissner-Nordstr\"om, and Kerr-Newman spacetimes \cite{Sto11e,Sto11f,Sto11g,Sto14a}, the Einstein equation at singularities \cite{Sto12b} and quantum gravity \cite{Sto12d}.

But if we replace the metric with a symmetric bilinear form which is allowed to become degenerate, as in singular {\semiriem} geometry, the {\metricname} cannot be inverted, to construct orthonormal coframes. Moreover, in the absence of the inverse of the {\metricname}, we need to find a way to construct objects which do the job of the Levi-Civita connection, and of the Riemann curvature. One important operation is the metric contraction between covariant indices, which requires a contravariant {\metricname}. In Riemannian and {\semiriem} geometry, the contravariant {\metricname} is obtained by inverting the covariant {\metricname} \cite{Balan1999diffgeom,udriste2005linear}, but if this one is degenerate, we can no longer invert it.

These difficulties were avoided in \cite{Sto11a}, where instead of the metric connection was used the Koszul object, and it was defined a Riemann curvature $R_{abcd}$, which coincides to the usual Riemann curvature tensor if the {\metricname} is {\nondeg}. In \cite{Sto11a} it was shown that, even when the metric is not invertible, a generalized metric contraction at a point $p\in M$ can be defined in an invariant way, on the subspace $\annih{T}_pM$ of the cotangent space $T_p^*M$, which consists in covectors of the form $\omega(V)=\metric{U,V}$, $U,V\in T_p M$. The contraction was shown to be well defined and has been extended to tensors of higher order, so long as these tensors live in the subspace $\annih{T}_pM$. This contraction was used to define the Riemann curvature tensor $R_{abcd}$.

In this article, I will show how to write structural equations similar to Cartan's, but which are valid for singular {\semiriem} geometry, as well as for the non-singular one. Section \sref{ss_singular_semi_riem} recalls briefly the main notions of singular {\semiriem} manifolds, which will be used in the article. In section \sref{s_cartan_structure_i} I construct the connection forms, and derive the first structural equation for {\rstationary} manifolds. The curvature forms are defined in section \sref{s_cartan_structure_ii}, which contains the derivation of the second structural equation for {\rstationary} manifolds.

\section{Brief review of singular {\semiriem} manifolds}
\label{ss_singular_semi_riem}

We recall some of the main results about singular {\semiriem} manifolds, from \cite{Sto11a}.

This paper is concerned with differentiable manifolds $M$, endowed with a symmetric bilinear form $g\in \Gamma(T^*M \odot_M T^*M)$, named \textit{{\metricname}} or \textit{metric}. The {\metricname} is allowed to be degenerate, or to become degenerate on some regions. In this case, the pair $(M,g)$ is named \textit{singular {\semiriem} manifold}.

\begin{remark}
The name ``singular {\semiriem} manifold'' may be not very inspired, since it suggests that such a manifold is {\semiriem}, while in fact is more general, containing the {\nondeg} case as a subcase. Despite this inadvertence, this name is generally used in the literature (\cite{Moi40,Vra42,Kup96}), and I adhere to it. In addition, whenever I introduce geometric objects which are similar to objects from the {\nondeg} {\semiriem} geometry, and which generalize them, I try as much as possible to use the standard terminology from {\semiriem} geometry (see \eg \cite{ONe83}).
\end{remark}

At any point $p\in M$, there is a frame in $T_pM$ in which the fundamental tensor $g$ has the diagonal form
$$\diag(0,\ldots,0,-1,\ldots,-1,1,\ldots,1),$$
in which $0$ appears $r$ times, $-1$ appears $s$ times, and $1$ appears $t$ times. The triple $(r,s,t)$ is named the \textit{signature} of of $g$, and $\dim M=r+s+t$, and $\rank g=s+t=n-r$. If the signature is allowed to vary from point to point, $(M,g)$ is said to be with \textit{variable signature}, otherwise it is said to be with \textit{constant signature}. If $g$ is {\nondeg}, then $(M,g)$ is named \textit{{\semiriem} manifold}, and if in addition it is positive definite, $(M,g)$ is named \textit{Riemannian manifold}.

\begin{remark}
The theory developed in \cite{Sto11a}, and here, does not make any assumptions about the degeneracy of the {\metricname}. Because of this, these results also apply to Riemannian and {\semiriem} manifolds.
\end{remark}

\begin{remark}
The signature of the {\metricname} of a singular {\semiriem} manifold may vary from one region to another. In \cite{Kup87b,Kup96}, and in general in the literature, was preferred to maintain the signature constant. This is justifiable, because the most singular behavior of fields takes place at the boundary between two regions of different signature. But the development which took place in \cite{Sto11a}, especially the introduction of the {\semireg} manifolds, allows us to deal also with the situations when the signature of the {\metricname} is allowed to change.
\end{remark}

The remaining of this section recalls very briefly the main notions and results on singular {\semiriem} manifolds, as developed in \cite{Sto11a}.

Let's define the operator $\flat:T_pM\to T^*_pM$, which associates to each vector $X_p\in T_pM$ the $1$-form $\flat(X_p)=X_p^{\flat}\in T^*_pM$, defined as $X_p^{\flat}(Y):=\metric{X_p,Y_p}$, for any $Y_p\in T_pM$. The operator $\flat$ is a morphism of vector spaces; it is an isomorphism if and only if the {\metricname} is {\nondeg}, but the fact that it is a morphism will be enough for the following. Note that, in general, there is no corresponding inverse operator $\sharp=\flat^{-1}$.

Let $\annih{(T_pM)} = \flat(T_pM) \subseteq T^*_pM$ be the space of covectors at $p\in M$ which can be expressed as $\omega_p=Y_p^{\flat}$, for some $Y_p\in T_p M$. We denote by $\annih{T}M$ the subset of the cotangent bundle defined as
\begin{equation}
	\annih{T}M=\bigcup_{p\in M}\annih{(T_pM)}.
\end{equation}
 The space $\annih{T}M$ is a vector bundle if and only if the signature of the {\metricname} is constant.

We can define the subset of sections of $T^*M$ which are valued, at each $p$, in $\annih{(T_pM)}$, by
\begin{equation}
	\annihforms{M}:=\{\omega\in\Gamma(T^*M)|\omega_p\in\annih{(T_pM)}\tn{ for any }p\in M\}.
\end{equation}

On $\annih{T_p}M$ we can define a unique {\nondeg} inner product $\annihg_p$ by
$$\annihg_p(\omega_p,\tau_p):=\metric{X_p,Y_p},$$
where $X_p,Y_p\in T_p M$, $X_p^{\flat}=\omega_p$ and $Y_p^{\flat}=\tau_p$. We alternatively use the notations $\annihg(\omega_p,\tau_p)=\omega_p(\cocontr)\tau_p(\cocontr)$, and call $\omega_p(\cocontr)\tau_p(\cocontr)$ the \textit{covariant metric contraction} between $\omega_p$ and $\tau_p$. 
This metric contraction is defined at all points of the manifold $M$, and there is no need to exclude any of them. It is invariant for components living in the space $\annih{(T_pM)}$. It is not defined for components living in $T_pM^*-\annih{(T_pM)}$, therefore it will be applied only to components living in the space $\annih{(T_pM)}$.

\textit{The Koszul object} is defined as
\begin{equation}
\label{eq_Koszul_form}
	\kosz:\fivect M^3\to\R,
\end{equation}
\begin{equation*}
\begin{array}{llll}
	\kosz(X,Y,Z) &:=&\ds{\frac 1 2} \{ X \metric{Y,Z} + Y \metric{Z,X} - Z \metric{X,Y} \\
	&&\ - \metric{X,[Y,Z]} + \metric{Y, [Z,X]} + \metric{Z, [X,Y]}\}.
\end{array}
\end{equation*}

For a non-degenerate {\metricname} $g$, we can define the covariant derivative $\nabla_X Y$ of a vector field $Y$ in the direction of a vector field $X$, where $X,Y\in\fivect{M}$, by the \textit{Koszul formula} $\metric{\nabla_X Y,Z}=\kosz(X,Y,Z)$ \cfeg{ONe83}{61}. For a {\metricname} $g$ which can be degenerate, the covariant derivative cannot be extracted from the Koszul formula. But a lot of what can be done using the covariant derivative, can also be accomplished by working with the Koszul object, which is defined and smooth independently on the rank and signature of $g$.

Like the Koszul object from the {\nondeg} case, our Koszul object is not a tensor, but it can be used to construct a Riemann curvature, which is tensor. Its components in a chart are $\kosz_{abc}=\kosz(\partial_a,\partial_b,\partial_c)=\frac 1 2 (\partial_a g_{bc} + \partial_b g_{ca} - \partial_c g_{ab})$, which are Christoffel's symbols of the first kind, $\Gamma_{cab}=[ab,c]$.

\begin{definition}[see \cite{Kup96} Definition 3.1.3]
\label{def_radical_stationary_manifold}
A manifold $(M,g)$ is called \textit{{\rstationary}} if the Koszul object satisfies the condition 
\begin{equation}
\label{eq_radical_stationary_manifold}
		\kosz(X,Y,\_)\in\annihforms M,
\end{equation}
for any $X,Y\in\fivect{M}$.
\end{definition}

\begin{definition}
\label{def_semi_regular_semi_riemannian}
A {\rstationary} manifold $(M,g)$ whose Koszul object satisfies
\begin{equation}
	\kosz(X,Y,\cocontr)\kosz(Z,T,\cocontr) \in \fiscal M
\end{equation}
for any $X,Y,Z,T\in\fivect M$, is called \textit{{\semireg} manifold}.
\end{definition}

\begin{remark}
If the signature of the {\metricname} of a {\rstationary} manifold $(M,g)$ is constant, then $(M,g)$ is {\semireg}. The important difference appears at the points where the signature of $g$ changes. In general, at signature changes the covariant metric contraction blows up. The condition from the definition \ref{def_semi_regular_semi_riemannian} ensures that this doesn't happen for $\kosz(X,Y,\cocontr)\kosz(Z,T,\cocontr)$. This is enough to ensure nice properties, in particular a smooth Riemann curvature (definition \ref{def_riemann_curvature}).
\end{remark}

\begin{remark}
In \cite{Sto11a} it was given a different definition for {\semireg} manifolds. The Definition \ref{def_semi_regular_semi_riemannian} was proved in \cite{Sto11a} to be equivalent. Similarly, the definition of the Riemann curvature given in \cite{Sto11a} is different than the one given below, but they are shown to be equivalent. I preferred here these definitions, because they simplify the task of finding the structure equations.
\end{remark}

\begin{definition}
\label{def_riemann_curvature}
Let $(M,g)$ be a {\rstationary} manifold. The object
\begin{equation}
\label{eq_riemann_curvature_tensor_koszul_formula}
	R: \fivect M\times \fivect M\times \fivect M\times \fivect M \to \R,
\end{equation}
\begin{equation*}
\begin{array}{lll}
	R(X,Y,Z,T)&=& X \kosz(Y,Z,T) - Y \kosz(X,Z,T) - \kosz([X,Y],Z,T)\\
	&& + \kosz(X,Z,\cocontr)\kosz(Y,T,\cocontr) - \kosz(Y,Z,\cocontr)\kosz(X,T,\cocontr),
\end{array}
\end{equation*}
for any vector fields $X,Y,Z,T\in\fivect{M}$, is called the \textit{Riemann curvature} associated to the Koszul object $\kosz$.
\end{definition}

\begin{remark}
In \cite{Sto11a} is shown that the Riemann curvature is a tensor, and has the same symmetries as the Riemann curvature tensor from the {\nondeg} case. In the {\nondeg} case, it coincides to the Riemann curvature tensor $R(X,Y,Z,T)$.
\end{remark}

\section{The first structural equation}
\label{s_cartan_structure_i}

Cartan's first structural equation shows how a moving coframe rotates when moving in one direction, due to the connection. In the following, we will derive the first structural equation for the case when the {\metricname} is allowed to be degenerate. Of course, in this case we will not have a notion of local orthonormal frame, and we will work instead with vectors and annihilator covectors.
The following decomposition of the Koszul object will be needed to derive the first structural equation.

\subsection{The decomposition of the Koszul object}
\label{s_koszul_form_compact}

\begin{lemma}
\label{thm_Koszul_form_compact}
The Koszul object \eqref{eq_Koszul_form}
 decomposes as
\begin{equation}
\label{eq_Koszul_form_compact}
	2\kosz(X,Y,Z) = (\de Y^{\flat})(X, Z) + (\lie_Y g)(X,Z).
\end{equation}
\end{lemma}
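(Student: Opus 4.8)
The plan is to prove the identity by a direct expansion of both sides and a term-by-term comparison; since this is an equality of functions on $M$, it suffices to fix arbitrary $X,Y,Z\in\fivect M$ and verify the resulting algebraic identity.

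First I would write out the left-hand side verbatim from Definition \ref{def_Koszul_form}:
\[
2\kosz(X,Y,Z) = X\metric{Y,Z} + Y\metric{Z,X} - Z\metric{X,Y} - \metric{X,[Y,Z]} + \metric{Y,[Z,X]} + \metric{Z,[X,Y]}.
\]
For the right-hand side I would invoke the two standard coordinate-free formulas, both of which are available here because neither requires inverting the metric. The intrinsic formula for the exterior derivative of a one-form gives, using $Y^\flat(\,\cdot\,)=\metric{Y,\,\cdot\,}$,
\[
(\de Y^\flat)(X,Z) = X\big(Y^\flat(Z)\big) - Z\big(Y^\flat(X)\big) - Y^\flat([X,Z]) = X\metric{Y,Z} - Z\metric{Y,X} - \metric{Y,[X,Z]},
\]
while the Lie derivative of the symmetric bilinear form $g$ expands as
\[
(\lie_Y g)(X,Z) = Y\metric{X,Z} - \metric{[Y,X],Z} - \metric{X,[Y,Z]}.
\]

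Adding these two expressions and simplifying with the symmetry of the metric, $\metric{X,Z}=\metric{Z,X}$, and the antisymmetry of the bracket, $[X,Z]=-[Z,X]$, the two remaining bracket contractions rearrange as $-\metric{Y,[X,Z]}=\metric{Y,[Z,X]}$ and $-\metric{[Y,X],Z}=\metric{Z,[X,Y]}$, while the three derivative terms line up directly. All six terms then coincide with those in the expansion of $2\kosz(X,Y,Z)$ above, which completes the verification.

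I expect no genuine obstacle here beyond careful sign bookkeeping, so the proof should be short. The one point worth flagging is conceptual rather than technical: $Y^\flat$ is a bona fide one-form because lowering an index with $g$ is always defined even when $g$ is degenerate (indeed $Y^\flat\in\annihforms M$), so $\de Y^\flat$ is unambiguous and nowhere does the argument appeal to the missing inverse metric. This is precisely why the decomposition survives into the singular setting and will be usable in deriving the structural equation.
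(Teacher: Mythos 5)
Your proof is correct and follows essentially the same route as the paper: expand $(\de Y^{\flat})(X,Z)$ via the intrinsic formula for the exterior derivative, expand $(\lie_Y g)(X,Z)$ via the Lie derivative formula, and match the six resulting terms against the definition of $2\kosz(X,Y,Z)$. The sign bookkeeping and the remark that $Y^\flat$ and hence $\de Y^\flat$ need no inverse metric are both accurate.
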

\begin{proof}
From the formula for the exterior derivative we get
\begin{equation*}
\begin{array}{lll}
(\de Y^{\flat})(X, Z) &=& X\(Y^{\flat}(Z)\) - Z\(Y^{\flat}(X)\) - Y^{\flat}([X,Z]) \\
&=&  X\metric{Y,Z} - Z\metric{X,Y} + \metric{Y,[Z,X]}. \\
\end{array}
\end{equation*}
The Lie derivative is
\begin{equation*}
\begin{array}{lll}
	(\lie_Y g)(Z,X) &=& Y g(Z,X) - g([Y,Z],X) - g(Z,[Y,X]) \\
	&=& Y \metric{Z,X} - \metric{X,[Y,Z]} + \metric{Z,[X,Y]}. \\
\end{array}
\end{equation*}
The relation \eqref{eq_Koszul_form_compact} follows then immediately.
\end{proof}

\begin{corollary} The Koszul object \eqref{eq_Koszul_form} has the property
\begin{equation}
(\de Y^{\flat})(X, Z) =	\kosz(X,Y,Z) - \kosz(Z,Y,X) .
\end{equation}
\end{corollary}
\begin{proof}
This is an immediate consequence of the properties of the Koszul object and the Lemma \ref{thm_Koszul_form_compact}.
\end{proof}

\subsection{The connection forms}
\label{s_conn_form}

Let $(M,g)$ be a {\nondeg} manifold. If $(E_a)_{a=1}^n$ is a local orthonormal frame on $M$ with respect to $g$, then its dual $(\omega^b)_{b=1}^n$, defined by $\omega^b(E_a)=\delta^b_a$, is also orthonormal. 
The $1$-forms $\omega_a{}^b$, $1\leq a,b\leq n$ defined as
\begin{equation}
\label{eq_connection_forms_nondeg}
	\omega_a{}^b(X) := \omega^b(\nabla_X E_a)
\end{equation}
are called the \textit{connection forms} (\cf \eg \cite{ONe95}).

It is important to be aware that the indices $a,b$ label the connection $1$-forms $\omega_a{}^b$,  and they don't represent the components of a form.

For a general {\metricname} $g$, there is no Levi-Civita connection $\nabla_X$ with respect to $g$, and hence $\nabla_X E_a$ does not exist. Also, a frame $(E_a)_{a=1}^n$ cannot be orthonormal with respect to $g$, but it can be orthogonal. But its dual frame $(\omega^b)_{b=1}^n$ cannot even be orthogonal, because the {\metricname} $\annihg(\omega,\tau)$ is not defined for the entire $T^*M$, but only for $\annih{T}M$. Here I show an alternative way to define connection $1$-forms for this case.

\begin{definition}
\label{def_connection_form}
Let $X,Y\in\fivect{M}$ be two vector fields. Then, the $1$-form defined as
\begin{equation}
\label{eq_connection_forms}
	\omega_{XY}(Z) := \kosz(Z,X,Y)
\end{equation}
is called the \textit{connection form} associated to the Koszul object $\kosz$ and the vector fields $X,Y$.
In particular, we define $\omega_{ab}$ by
\begin{equation}
\label{eq_connection_forms_frame}
	\omega_{ab}(Z) := \omega_{E_a E_b}(Z).
\end{equation}
\end{definition}

\begin{remark}
The fact that $\omega_{XY}$ is $1$-form follows from the fact that the Koszul object is linear, and $\fiscal M$-linear in the first argument.
\end{remark}

\subsection{The first structural equation}
\label{ss_cartan_structure_i}

Let $(M,g)$ be a {\rstationary} manifold.

\begin{lemma}
\label{thm_cartan_structure_i}
The following equation, called \textit{the first structural equation} determined by the metric $g$, holds
\begin{equation}
\label{eq_cartan_structure_i}
	\de X^{\flat} = \omega_{X\cocontr} \wedge \cocontr^{\flat},
\end{equation}
where $\omega_{X\cocontr} \wedge \cocontr^{\flat}$ is the metric contraction of $\omega_{XY}\wedge Z^{\flat}$ in $Y,Z$.
\end{lemma}
\begin{proof}
From the Lemma \ref{thm_Koszul_form_compact} and from the formula \eqref{eq_Koszul_form} of the Koszul object, we find
\begin{equation}
\label{eq_cartan_structure_i_kosz}
	(\de X^{\flat})(Y,Z) = \kosz(Y,X,Z) - \kosz(Z,X,Y).
\end{equation}
By replacing the Koszul object with the connection $1$-form, we get
\begin{equation}
\label{eq_cartan_structure_i_expanded}
	(\de X^{\flat})(Y,Z) = \omega_{XZ}(Y) - \omega_{XY}(Z).
\end{equation}
By using the properties of the metric contraction and the property of $(M,g)$ of being {\rstationary}, we can expand the Koszul object as
\begin{equation}
\kosz(X,Y,Z) = \kosz(X,Y,\cocontr)\metric{\cocontr,Z} = \kosz(X,Y,\cocontr)\(\cocontr^{\flat}(Z)\).
\end{equation}
We can do the same for the connection $1$-form, \ie,
\begin{equation}
\begin{array}{lll}
\omega_{YZ}(X) &=& \omega_{Y\cocontr}(X)\metric{\cocontr,Z} = \omega_{Y\cocontr}(X)\(\cocontr^{\flat}(Z)\) \\
&=& \(\omega_{Y\cocontr}\otimes\cocontr^{\flat}\)(X,Z).
\end{array}
\end{equation}
The equation \eqref{eq_cartan_structure_i_expanded} becomes
\begin{equation}
\begin{array}{lll}
	(\de X^{\flat})(Y,Z) &=& \(\omega_{X\cocontr}\otimes\cocontr^{\flat}\)(Y,Z) - \(\omega_{X\cocontr}\otimes\cocontr^{\flat}\)(Z,Y) \\
	&=& \(\omega_{X\cocontr} \wedge \cocontr^{\flat}\)(Y,Z).
\end{array}
\end{equation}
\end{proof}

The following corollary shows how we get the first structural equation as we know it.
\begin{corollary}
\label{thm_cartan_structure_i_std}
If the {\metricname} $g$ is {\nondeg}, $(E_a)_{a=1}^n$ is an orthonormal frame, and $(\omega^a)_{a=1}^n$ is its dual, then
\begin{equation}
\label{eq_cartan_structure_i_std}
	\de \omega^a = -\omega_s{}^a \wedge \omega^s.
\end{equation}
\end{corollary}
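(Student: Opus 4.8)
The plan is to specialize the abstract first structural equation of Lemma~\ref{thm_cartan_structure_i} to an orthonormal frame and then translate the abstract connection forms $\omega_{ab}$ of Definition~\ref{def_connection_form} into the classical connection forms $\omega_a{}^b$. Throughout I write $\epsilon_a := \metric{E_a,E_a}\in\{-1,+1\}$, so that orthonormality reads $\metric{E_a,E_b}=\epsilon_a\delta_{ab}$ and hence $E_a^{\flat}=\epsilon_a\omega^a$ (no sum), since $E_a^{\flat}(E_b)=\epsilon_a\delta_{ab}=\epsilon_a\omega^a(E_b)$. Because the metric is non-degenerate the covariant contraction $\cocontr$ is the ordinary contraction by $g^{-1}$, which in the orthonormal coframe carries the weights $\epsilon_s$; concretely one checks, exactly as in the resolution step inside the proof of Lemma~\ref{thm_cartan_structure_i}, that $\omega_{E_a\cocontr}\wedge\cocontr^{\flat}=\sum_s\epsilon_s\,\omega_{as}\wedge E_s^{\flat}$.

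First I would apply Lemma~\ref{thm_cartan_structure_i} with $X=E_a$ and insert this frame expansion of the contraction, obtaining $\de(E_a^{\flat})=\sum_s\epsilon_s\,\omega_{as}\wedge E_s^{\flat}$. Next I would record two elementary identities for the abstract connection forms. The first is the antisymmetry $\omega_{ab}=-\omega_{ba}$, immediate from the metric property \eqref{thm_Koszul_form_props_commutYZ} of Theorem~\ref{thm_Koszul_form_props} applied to the constant-inner-product pair $E_a,E_b$, for which $X\metric{E_a,E_b}=0$. The second is the dictionary with the classical forms: in the non-degenerate case the Koszul form equals $\kosz(X,E_a,E_b)=\metric{\nabla_X E_a,E_b}$, whence $\omega_{ab}(X)=\metric{\nabla_X E_a,E_b}=\epsilon_b\,\omega^b(\nabla_X E_a)=\epsilon_b\,\omega_a{}^b(X)$, i.e. $\omega_{ab}=\epsilon_b\,\omega_a{}^b$ (no sum).

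Then I would substitute these into the specialized structural equation. Using $\omega_{as}=-\omega_{sa}=-\epsilon_a\,\omega_s{}^a$ together with $E_s^{\flat}=\epsilon_s\omega^s$ in each summand, the product of signature factors collapses to $-\epsilon_a$, giving $\de(E_a^{\flat})=-\epsilon_a\sum_s\omega_s{}^a\wedge\omega^s$. Finally, since $E_a^{\flat}=\epsilon_a\omega^a$ yields $\de(E_a^{\flat})=\epsilon_a\,\de\omega^a$, dividing by $\epsilon_a$ produces $\de\omega^a=-\omega_s{}^a\wedge\omega^s$, as claimed.

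Most of this is bookkeeping; the one genuine subtlety, and the main place an error could hide, is the handling of the covariant contraction $\cocontr$ in the orthonormal coframe. One must confirm that in the non-degenerate case $\cocontr$ reduces to contraction against the inverse metric and so contributes precisely the factors $\epsilon_s$, and then track carefully how those signs interact with the identifications $E_a^{\flat}=\epsilon_a\omega^a$ and $\omega_{ab}=\epsilon_b\,\omega_a{}^b$. Reassuringly, all signature factors cancel in the end, leaving the classical equation with its single overall minus sign.
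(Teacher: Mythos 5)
Your proof is correct and follows essentially the same route as the paper: specialize Lemma~\ref{thm_cartan_structure_i} to $X=E_a$, expand the covariant contraction in the frame, and combine the antisymmetry $\omega_{ab}=-\omega_{ba}$ (from the metric property of $\kosz$) with the identification of $\omega_{ab}$ with the classical $\omega_a{}^b$. The only difference is that you track the signature factors $\epsilon_a$ explicitly, whereas the paper's proof writes $\metric{E_a,E_b}=\delta_{ab}$ and $\omega^a=E_a^{\flat}$, implicitly taking all $\epsilon_a=+1$; your bookkeeping confirms that the signs cancel in the genuinely semi-Riemannian case as well.
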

\begin{proof}
From $\kosz(X,Y,Z) + \kosz(X,Z,Y) = X \metric{Y,Z}$, it follows
\begin{equation}
	\omega_{E_a E_b}(X) + \omega_{E_b E_a}(X) = X\metric{E_a,E_b} = X(\delta_{ab}) = 0,
\end{equation}
and therefore
\begin{equation}
	\omega_{E_a E_b} = -\omega_{E_b E_a}.
\end{equation}
From equation \eqref{eq_cartan_structure_i} we obtain
\begin{equation}
	\de E_a^{\flat} = \omega_{E_a E_s} \wedge \omega^s.
\end{equation}
Since $\omega_{E_a E_s} =-\omega_{E_s E_a}$ and $\omega^a=E_a^{\flat}$, the equation \eqref{eq_cartan_structure_i_std} follows.
\end{proof}

\begin{remark}
The version of the first structural equation obtained here has the advantage that it can be defined for general vector fields, which are not necessarily from an orthonormal local frame, or a local frame in general. It is well defined even if the {\metricname} becomes degenerate (but {\rstationary}). Of course, at the points where the signature changes we should not expect to have continuity, but on the regions of constant signature the contraction is smooth. If the manifold $(M,g)$ is {\semireg}, the smoothness is ensured even at the points where the {\metricname} changes its signature.
\end{remark}

\section{The second structural equation}
\label{s_cartan_structure_ii}

\subsection{The curvature forms}
\label{s_curvature_form}

\begin{definition}
\label{def_curvature_form}
Let $(M,g)$ be a radical-stationary manifold, let $X,Y,Z,T\in\fivect{M}$ be four vector fields, and $R(X,Y,Z,T)$ the Riemann curvature tensor. Then, the $2$-form
\begin{equation}
\label{eq_curvature_form}
	\Omega_{XY}(Z,T) := R(X,Y,Z,T)
\end{equation}
is called the \textit{Riemann curvature form} associated to the Koszul object $\kosz$, and the vector fields $X,Y$.
In particular, if $(E_a)_{a=1}^n$ is a frame field, we define $\Omega_{ab}$ by
\begin{equation}
\label{eq_curvature_form_frame}
	\Omega_{ab}(Z,T) := \Omega_{E_a E_b}(Z,T).
\end{equation}
\end{definition}

\subsection{The second structural equation}
\label{ss_cartan_structure_ii}

\begin{lemma}
\label{thm_cartan_structure_ii}
Let $(M,g)$ be a radical-stationary manifold, and let $X,Y\in\fivect{M}$ be two vector fields. Then, the equation 
\begin{equation}
\label{eq_cartan_structure_ii}
	\Omega_{XY} = \de\omega_{XY} + \omega_{X\cocontr} \wedge \omega_{Y\cocontr}
\end{equation}
holds, and is called \textit{the second structural equation} determined by the metric $g$.
\end{lemma}
\begin{proof}
From the definition of the exterior derivative it follows
\begin{equation}
\label{eq_cartan_structure_ii_a}
\begin{array}{lll}
\de\omega_{XY}(Z,T) &=& Z\(\omega_{XY}(T)\) - T\(\omega_{XY}(Z)\) - \omega_{XY}([T,Z]) \\
&=& Z\kosz(T,X,Y) - T\kosz(Z,X,Y) - \kosz([T,Z],X,Y).
\end{array}
\end{equation}
On the other hand, 
\begin{equation}
\label{eq_cartan_structure_ii_b}
\begin{array}{lll}
\(\omega_{X\cocontr} \wedge \omega_{Y\cocontr}\)(Z,T) &=& \omega_{X\cocontr}(Z) \omega_{Y\cocontr}(T) - \omega_{X\cocontr}(T) \omega_{Y\cocontr}(Z) \\
&=& \kosz(Z,X,\cocontr) \kosz(T,Y,\cocontr) - \kosz(T,X,\cocontr) \kosz(Z,Y,\cocontr).
\end{array}
\end{equation}
From the equation \eqref{eq_riemann_curvature_tensor_koszul_formula}, it follows
\begin{equation}
\label{eq_cartan_structure_ii_c}
\begin{array}{lll}
	R(X,Y,Z,T)&=& Z \kosz(T,X,Y) - T \kosz(Z,X,Y) - \kosz([Z,T],X,Y)\\
	&& + \kosz(Z,X,\cocontr)\kosz(T,Y,\cocontr) - \kosz(T,X,\cocontr)\kosz(Z,Y,\cocontr),
\end{array}
\end{equation}
and from the identities \eqref{eq_cartan_structure_ii_a}, \eqref{eq_cartan_structure_ii_b} and \eqref{eq_cartan_structure_ii_c} the equation \eqref{eq_cartan_structure_ii} follows.
\end{proof}


%
%
{\bf Acknowledgements.}
Partially supported by Romanian Government grant PN II Idei 1187.

The author wishes to thank Prof. Dr. Constantin Udri\c{s}te and Prof. Dr. Gabriel Pripoae, for stimulating discussions and valuable observations.



\end{document}